\documentclass[12pt]{article}

\usepackage[margin=1in]{geometry}  
\usepackage{graphicx}              
\usepackage{amsmath}               
\usepackage{amsfonts}              
\usepackage{amsthm}                

\newtheorem{definition}{Definition}
\newtheorem{lem}{Lemma}
\newtheorem{prop}{Proposition}
\newtheorem{cor}{Corollary}
\newtheorem{conj}{Question}


\begin{document}


\title{Group With Maximum Undirected Edges in Directed Power Graph Among All  Finite Non-Cyclic\\ Nilpotent Groups}

\author{ P. Darbari${}^2$ and B. Khosravi${}^{1,2}$\thanks{The second author was supported in part
by a grant from IPM (No. 92050120}\\
 $^{1}$School of Mathematics,
\\ Institute for Research in Fundamental sciences  (IPM),\\ P.O.Box:
19395--5746, Tehran, Iran \\
 $^{2}$Dept. of Pure  Math.,  Faculty  of Math. and Computer Sci. \\
Amirkabir University of Technology (Tehran Polytechnic)\\ 424, Hafez
Ave., Tehran 15914, IRAN \\
email: khosravibbb@yahoo.com}

\maketitle

\footnote{AMS Subject Classifications: $20$D$08$, $20$D$60$.
\\ Keywords: Undirected power graph, directed power graph, nilpotent
groups.}

\begin{abstract}

In [Curtin and Pourgholi, A group sum inequality and its application to power graphs, J. Algebraic Combinatorics, 2014], it is proved that among all directed power graphs of groups of a given order $ n $, the directed power graph of cyclic group of order $ n $ has the maximum number of undirected edges. In this paper, we continue their work and we determine a non-cyclic nilpotent group of an odd order $ n $ whose directed power graph has the maximum number of undirected edges among all non-cyclic nilpotent groups of order $ n $.

 We next determine non-cyclic $p$-groups whose undirected power graphs have the maximum number of edges among all groups of the same order.

\end{abstract}

\section{Introduction}
Many authors studied the directed (undirected) power graphs of
finite groups (see \cite{Cam, Cameron, Ivy}). This is an interesting
question in this field to determine groups with maximum edges or
maximum directed edges in their directed power graphs or undirected
power graphs.

 Isaacs
et al. in \cite{isa} proved that among all groups of order $n$, the
summation of the element orders of the cyclic group of order $n$ is
maximum. This is equivalent to this fact that among all groups of
order $n$ the number of directed edges in the directed power graph
of cyclic group of order $n$ is maximum. Later in \cite{SKeller} the
second maximum sum of element orders of finite nilpotent groups was
determined.

 Curtin et al. in \cite{Kevi} showed that
among directed power graphs of finite groups of a given order $ n $,
the cyclic group of order $ n $ has the maximum number of undirected
edges. In another studies \cite{Kevin}, they showed that the same is
true for undirected power graphs.  It is a natural question that
which non-cyclic groups have the maximum number of undirected edges
in their directed power graphs. In this paper we determine a
non-cyclic group $ G $ of an odd order such that among all
non-cyclic nilpotent groups of order $ \vert G\vert $ has maximum
number of undirected edges in its directed power graph. In the rest
of the paper, we determine finite non-cyclic $ p $-groups whose
undirected power graphs have the maximum number of edges.
\begin{definition}
Let $ G $ be a finite group. Let $ \langle g\rangle $ denote the cyclic subgroup of $ G $ generated by $ g\in G $.
\begin{enumerate}
\item[(i)]
The directed power graph $ \overrightarrow{\mathcal{G}}(G) $ of $ G $ is the directed graph whose vertex set is elements of $ G $ and for two distinct vertices $ x,y\in G $ there is an arc from $ x $ to $ y $ if and only if $ y=x^m $, for some positive integer $ m $; hence the set of directed edges is $ \overrightarrow{E}(G)=\{(g, h) \,| \, g, h \in G, h \in \langle g \rangle - \{g\}\} $ and the set of undirected edges is $\overleftrightarrow{E}(G)= \{\{g,h\}\} | \, h,g\in G,\, h\in\langle g\rangle-\{g\}\, and \,  g\in \langle h\rangle-\{h\}\}$.
\item[(ii)]
The undirected power graph (or power graph) $ \mathcal{G}(G) $ of $ G $ is the undirected graph whose vertex
set is the elements of $ G $ and two vertices being adjacent if one is a power of the other; hence the set of edges is $E(G)=\{(g, h) \,| \, h,g\in G, \, g\in\langle h\rangle -\{h\} \, or\, h\in \langle g\rangle-\{g\} \} $.
\end{enumerate}
\end{definition}
Let $ G $ be a group. We denote the order of an element $ a $ in a group $ G $ by $ o(a) $. Let $ \phi $ denotes the Euler totient function. Throughout the paper we use $ C_m $ to denote the cyclic group of order $ m $. Also, if $ n\geq 3 $ and $ p $ is an odd prime number, then we suppose that $ M_{n,p}= \langle a,b\vert \, b^p=1=a^{p^{n-1}},\, a^b=a^{1+p^{n-2}}\rangle$.
\\
\par
Let $ g$ and $ h $ be distinct elements of $ G $. There is an undirected edge between $ g $ and $ h $ when they generate the same subgroup of $ G $; hence the number of undirected edges which involve the vertex $ g $ is precisely $ \phi(o(g))-1$. Thus
\begin{equation*}
\vert \overleftrightarrow{E}(G)\vert =\frac{1}{2}\Sigma_{g\in G}(\phi(o(g))- 1),
\end{equation*}

Now let $  \phi (G)= \Sigma_{g\in G}{\phi(o(g))}$. In order to find non-cyclic nilpotent groups of a given order with the maximum value of $ \vert \overleftrightarrow{E}(G)\vert$, we should find  non-cyclic nilpotent groups with maximum value of $ \phi$. It was shown in \cite{Kevi} that among all groups of a given order, the cyclic group has the maxmimum value of $ \phi $. In this paper, among all non-cyclic nilpotent groups of an odd order, we determine a group with the maximum value of $ \phi $. We note that if the order of a nilpotent group $ G $ is free square, then $ G $ is cyclic. Hence, our main result is the following theorem.
\bigskip

\textbf{\\Main Theorem.}
{\it Let $ n=p^{\alpha_1}_1p^{\alpha_2}_2\cdots p^{\alpha_k}_k $ be a positive odd integer which is not square free and  $  p_1 < p_2 < \cdots < p_k  $ are primes. Set $  s = min\{1,\ldots,k\}$ such that $ \alpha_s>1 $. Suppose that $ G $ is a non-cyclic nilpotent group of order $ n $. Then $ \phi(G)\leq\phi(C_{\frac{n}{p_s}}\times C_{p_s}) $. Thereore
\begin{equation*}
 \vert \overleftrightarrow{E}(C_{\frac{n}{p_s}}\times C_{p_s})\vert\geq \vert\overleftrightarrow{E}(G) \vert ,
\end{equation*}
i.e., the directed power graph of $ C_{\frac{n}{p_s}}\times C_{p_s} $ has the maximum number of undirected edges among all non-cyclic nilpotent groups of order $ n $.
}

\section{Proof of Main Theorem}

For the proof of the main result, we use the following lemma.

\begin{lem}\cite[Lemma 3.1]{Kevi}
Let $ U $ and $ T $ be finite groups with $ (\vert U\vert,\vert T\vert)=1 $, and let $ G=U\times T$ be the direct product of $ U $ and $ T$. Then $ \phi(G) =\phi(U)\phi(T)$.
\end{lem}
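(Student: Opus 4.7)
The plan is to exploit two standard multiplicativity facts: (i) in a direct product whose factor orders are coprime, the order of an element $(u,t)$ factors as $o(u)\cdot o(t)$; and (ii) the Euler totient function is multiplicative on coprime arguments. Together these will let me separate the double sum defining $\phi(G)$ over $U\times T$ into a product of two single sums, one for each factor.

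First I would fix $(u,t)\in U\times T$ and establish $o((u,t)) = o(u)\cdot o(t)$. Since $(u,t)^m=(u^m,t^m)$, the order of $(u,t)$ is the least common multiple of $o(u)$ and $o(t)$. By Lagrange's theorem, $o(u)$ divides $|U|$ and $o(t)$ divides $|T|$, so the hypothesis $(|U|,|T|)=1$ forces $(o(u),o(t))=1$, and in that case the least common multiple coincides with the product.

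With $(o(u),o(t))=1$ in hand, the classical multiplicativity of Euler's totient gives $\phi(o(u)\cdot o(t))=\phi(o(u))\,\phi(o(t))$. I would then rewrite the sum defining $\phi(G)$ as an iterated sum over $U$ and $T$ and factor it:
\[
\phi(G)=\sum_{(u,t)\in U\times T}\phi(o((u,t)))=\sum_{u\in U}\sum_{t\in T}\phi(o(u))\,\phi(o(t))=\Bigl(\sum_{u\in U}\phi(o(u))\Bigr)\Bigl(\sum_{t\in T}\phi(o(t))\Bigr)=\phi(U)\,\phi(T).
\]

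The main obstacle here is really only a bookkeeping point: I need to be careful that the coprimality hypothesis on the group orders transfers, via Lagrange, to coprimality of the individual element orders, because both subsequent steps (the order formula in a direct product and the multiplicativity of the number-theoretic $\phi$) rely on it. Once this is observed, the remaining manipulation of the double sum is purely formal.
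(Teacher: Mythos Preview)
Your argument is correct and is the standard one: coprimality of $|U|$ and $|T|$ passes via Lagrange to coprimality of element orders, whence $o((u,t))=o(u)\,o(t)$ and $\phi(o((u,t)))=\phi(o(u))\,\phi(o(t))$, and the double sum factors. The paper does not supply its own proof of this lemma but simply cites \cite[Lemma~3.1]{Kevi}, so there is nothing further to compare; your write-up in fact fills in what the paper omits.
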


\begin{prop}\label{Bj}
Among all finite non-cyclic groups of order $ p^n $ where $ p $ is an odd prime number, the groups $ C_{p^{n-1}} \times C_p $ and $ M_{n,p}$ have the maximum value of $ \phi $.
\end{prop}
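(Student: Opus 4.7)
The plan is to split non-cyclic $p$-groups of order $p^n$ by their exponent. Such a group has exponent at most $p^{n-1}$, so I will (i) classify the non-cyclic $G$ attaining exponent exactly $p^{n-1}$ and check that they have equal $\phi$-value, and (ii) use a crude bound to rule out every non-cyclic $G$ with exponent at most $p^{n-2}$.

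For (i), any element of order $p^{n-1}$ generates a maximal cyclic subgroup, so $G$ has a cyclic subgroup of index $p$. The classical classification (for $p$ odd and $n\geq 3$) lists exactly three isomorphism types with this property, namely $C_{p^n}$, $C_{p^{n-1}} \times C_p$, and $M_{n,p}$; only the latter two are non-cyclic. In $C_{p^{n-1}} \times C_p$ a direct count gives one identity, $p^2 - 1$ elements of order $p$, and $p^k(p-1)$ elements of order $p^k$ for $2 \leq k \leq n-1$. For $M_{n,p}$ I would exploit that $[a,b] = a^{p^{n-2}}$ is central of order $p$, so $M_{n,p}$ has nilpotency class $2$; the class-$2$ identity $(xy)^p = x^p y^p [y,x]^{\binom{p}{2}}$ applied to a general element $a^i b^j$ gives
\[
(a^i b^j)^p \;=\; a^{ip}\, b^{jp}\, [b,a]^{ij\binom{p}{2}}.
\]
For odd $p$ the exponent $p^{n-2}\binom{p}{2} = p^{n-1}(p-1)/2$ is a multiple of $p^{n-1}$, so the commutator factor collapses to $1$, and $b^{jp} = 1$; hence $(a^i b^j)^p = a^{ip}$. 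Combined with the identity $o(g) = p\cdot o(g^p)$ for $g \neq 1$ in a $p$-group, this lets me read the order of $a^i b^j$ off the $p$-adic valuation of $i$ and reproduces exactly the distribution computed for $C_{p^{n-1}} \times C_p$. So $\phi(M_{n,p}) = \phi(C_{p^{n-1}} \times C_p)$.

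For (ii), the map $k \mapsto \phi(p^k)$ is non-decreasing, so every element contributes at most $\phi(p^{n-2}) = p^{n-3}(p-1)$ to $\phi(G)$ when $G$ has exponent at most $p^{n-2}$, yielding
\[
\phi(G) \;\leq\; 1 + (p^n - 1)\,p^{n-3}(p-1).
\]
On the other hand, just the elements of order $p^{n-1}$ in $C_{p^{n-1}} \times C_p$ contribute $p^{n-1}(p-1)\cdot p^{n-2}(p-1) = p^{2n-3}(p-1)^2$ to $\phi(C_{p^{n-1}} \times C_p)$. Subtracting and factoring $p^{n-3}(p-1)$ reduces the required strict inequality to $p^{n-3}(p-1)\bigl[p^n(p-2) + 1\bigr] > 1$, which is immediate for every odd prime $p$ and every $n \geq 3$. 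The base case $n = 2$ is trivial since $C_p \times C_p = C_{p^{n-1}} \times C_p$ is the only non-cyclic group of order $p^2$ (and $M_{n,p}$ is not defined in that range).

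The main obstacle is the class-$2$ power computation for $M_{n,p}$: one has to use both the oddness of $p$ (to absorb the factor $(p-1)/2$ in $\binom{p}{2}$) and the order $p^{n-1}$ of $a$ to force the correction term to disappear. Once the clean identity $(a^i b^j)^p = a^{ip}$ is established, both the equality $\phi(M_{n,p}) = \phi(C_{p^{n-1}} \times C_p)$ and the comparison with lower-exponent non-cyclic groups fall out by short bookkeeping.
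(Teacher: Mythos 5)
Your proof is correct, but it is a genuinely different argument from the paper's. The paper first observes that for a $p$-group $\phi(o(g))=o(g)-o(g)/p$ for $g\neq e$, so that $\phi(G)=(1-\tfrac{1}{p})\Sigma_{g\in G}o(g)+\tfrac{1}{p}$ is an increasing affine function of the element-order sum, and then simply quotes Proposition 2.3 of Jafarian Amiri's paper on second maximum sums of element orders, which already identifies $C_{p^{n-1}}\times C_p$ and $M_{n,p}$ as the extremal non-cyclic $p$-groups for $\Sigma_{g\in G}o(g)$. You instead give a self-contained argument: you stratify by exponent, invoke the classical classification of $p$-groups with a cyclic subgroup of index $p$ (which for odd $p$ and $n\geq 3$ leaves only $C_{p^{n-1}}\times C_p$ and $M_{n,p}$ among non-cyclic groups of exponent $p^{n-1}$), compute via the class-$2$ identity that $(a^ib^j)^p=a^{ip}$ in $M_{n,p}$ so that the two candidates have identical order distributions, and kill everything of exponent at most $p^{n-2}$ with a crude counting bound. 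Your computations check out: the collapse of $[b,a]^{ij\binom{p}{2}}$ does use oddness of $p$ exactly as you say, the order distribution $1,\,p^2-1,\,p^k(p-1)$ for $2\leq k\leq n-1$ is right for both groups, and the inequality $p^{n-3}(p-1)[p^n(p-2)+1]>1$ settles the low-exponent case. What the paper's route buys is brevity and reuse (the same affine identity reappears in its later proposition on undirected power graphs); what yours buys is independence from the cited reference and the stronger, explicit fact that the two extremal groups share the same order statistics — at the cost of importing the (standard but nontrivial) classification of $p$-groups with a cyclic maximal subgroup.
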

\begin{proof}

Let $ G $ be a non-cyclic $p$-group of order $ p^n $. For every non-identity element $ g \in G$, we have $ \phi(o(g))= o(g)-\dfrac{o(g)}{p} $. Therefore
\begin{eqnarray*}
\phi(G)&=&\Sigma_{g\in G}\phi(o(g))=\Sigma_{g\in G - \{ e\}}(o(g)-\dfrac{o(g)}{p}) +1
\\
&=&\Sigma_{g\in G}o(g)-\Sigma_{g\in G}\frac{o(g)}{p}+\dfrac{1}{p}
\\
&=&(1-\dfrac{1}{p})\Sigma_{g\in G}o(g)+\dfrac{1}{p} .
\end{eqnarray*}

Hence, if $ \Sigma_{g\in G}o(g)$ has the maximum value, $ \phi(G) $ has the maximum value, too. It follows from Proposition 2.3 in \cite{SKeller} that $ \Sigma_{g\in G}o(g) $ has the maximum value when $ G\cong C_{p^{n-1}}\times C_p $ or $ G\cong M_{n,p} $. The proof is complete.
\end{proof}

\begin{cor}
Let $ G $ be a non-cyclic group of order $ p^n $ where $ p $ is an odd prime. Thus
\begin{equation*}
\vert\overleftrightarrow{E}(C_{p^{n-1}} \times C_p)\vert=\vert\overleftrightarrow{E}(M_{n,p})\vert\geq \vert\overleftrightarrow{E}(G)\vert .
\end{equation*}
Equality holds when $ G \cong C_{p^{n-1}}\times C_p  $ or $ G\cong M_{n,p} $.
\end{cor}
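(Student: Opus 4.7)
The plan is to reduce the edge count to a statement about $\phi(G)$ and then invoke Proposition \ref{Bj} directly. The corollary should fall out with essentially no new work.

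First I would recall the identity established in the introduction:
\begin{equation*}
\vert \overleftrightarrow{E}(G)\vert = \frac{1}{2}\sum_{g\in G}\bigl(\phi(o(g))-1\bigr) = \frac{1}{2}\bigl(\phi(G) - \vert G\vert\bigr).
\end{equation*}
Since every group under consideration has the same order $\vert G\vert = p^n$, the quantity $\vert \overleftrightarrow{E}(G)\vert$ is an affine increasing function of $\phi(G)$. In particular, maximizing $\vert \overleftrightarrow{E}(G)\vert$ over the class of non-cyclic groups of order $p^n$ is literally the same problem as maximizing $\phi(G)$ over that class.

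Now I would apply Proposition \ref{Bj}, which asserts that, for odd $p$, this maximum of $\phi$ is attained by $C_{p^{n-1}}\times C_p$ and by $M_{n,p}$. Because both groups achieve the common maximum value of $\phi$, the formula above forces
\begin{equation*}
\vert\overleftrightarrow{E}(C_{p^{n-1}} \times C_p)\vert = \vert\overleftrightarrow{E}(M_{n,p})\vert,
\end{equation*}
and any other non-cyclic $G$ of order $p^n$ satisfies $\phi(G) \le \phi(C_{p^{n-1}}\times C_p)$, which translates immediately to the desired inequality $\vert\overleftrightarrow{E}(G)\vert \le \vert\overleftrightarrow{E}(C_{p^{n-1}}\times C_p)\vert$. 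The equality clause follows by the same substitution applied in reverse to either of the extremal groups.

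There is no genuine obstacle here; the only item requiring attention is the bookkeeping in substituting $\vert G\vert = p^n$ into the edge-count identity and noting that this constant term drops out of the comparison. The substantive content has already been delivered by Proposition \ref{Bj} (which in turn rests on the sum-of-element-orders result from \cite{SKeller}), so the corollary is a one-line consequence of the preceding proposition.
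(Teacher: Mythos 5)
Your proposal is correct and follows exactly the route the paper intends for this corollary: since $\vert\overleftrightarrow{E}(G)\vert = \frac{1}{2}(\phi(G)-\vert G\vert)$ and $\vert G\vert=p^n$ is fixed, maximizing the undirected edge count is equivalent to maximizing $\phi(G)$, so the result is an immediate consequence of Proposition \ref{Bj}. The paper states this corollary without proof precisely because it is this one-line deduction, so there is nothing to add.
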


\begin{lem}\label{Mj}
Let $ p $ be a prime number and  $ m \geq 2$. Then

\begin{enumerate}
\item[(i)]
$\phi(C_{p^m})=\phi(C_{p^{m-1}})+\phi(p^m)^2  $;

\item[(ii)]
$ \phi(C_{p^{m-1}}\times C_p)=p\phi(C_{p^{m-1}})+(p-1)(p-2). $
\end{enumerate}

\end{lem}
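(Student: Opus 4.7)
My plan is to prove both parts by counting elements of each given order and using the formula $\phi(G)=\sum_{g\in G}\phi(o(g))$. The key input is the classical fact that in a cyclic group $C_{p^k}$ the number of elements of order $p^i$ is exactly $\phi(p^i)$ for each $0\le i\le k$.

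For part (i), I would partition $C_{p^m}$ by element order. Since $C_{p^m}$ contains exactly $\phi(p^i)$ elements of order $p^i$ for $i=0,1,\ldots,m$, the definition of $\phi(G)$ gives
\begin{equation*}
\phi(C_{p^m})=\sum_{i=0}^{m}\phi(p^i)\cdot\phi(p^i)=\sum_{i=0}^{m}\phi(p^i)^2.
\end{equation*}
Isolating the $i=m$ summand and recognizing the remaining sum as $\phi(C_{p^{m-1}})$ yields (i) immediately.

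For part (ii), I plan to split $G=C_{p^{m-1}}\times C_p$ into the subgroup $H=C_{p^{m-1}}\times\{0\}$ and its $p-1$ nontrivial cosets. The contribution of $H$ to $\phi(G)$ is exactly $\phi(C_{p^{m-1}})$. For a pair $(a,b)$ with $b\ne 0$, the order is $\operatorname{lcm}(o(a),p)$, which equals $p$ when $a=0$ and equals $o(a)$ when $o(a)\ge p$ (since then $p\mid o(a)$). Thus the contribution from outside $H$ breaks into the $p-1$ pairs of the form $(0,b)$, each contributing $\phi(p)=p-1$, and the pairs $(a,b)$ with $o(a)=p^i$ for $1\le i\le m-1$ and $b\ne 0$, of which there are $(p-1)\phi(p^i)$, each contributing $\phi(p^i)$.

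Collecting terms and using $\sum_{i=1}^{m-1}\phi(p^i)^2=\phi(C_{p^{m-1}})-1$ gives
\begin{equation*}
\phi(G)=\phi(C_{p^{m-1}})+(p-1)^2+(p-1)\bigl(\phi(C_{p^{m-1}})-1\bigr)=p\,\phi(C_{p^{m-1}})+(p-1)(p-2),
\end{equation*}
which is exactly (ii). The only mildly delicate step is the case analysis for orders in the nontrivial cosets, particularly separating out the $a=0$ elements so that the identity $o((a,b))=o(a)$ for $b\ne 0,\ o(a)\ge p$ can be applied uniformly to the rest; beyond that, the proof is a short bookkeeping argument.
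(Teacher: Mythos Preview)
Your proof is correct and follows essentially the same approach as the paper: for (i) you both split off the elements of maximal order $p^m$ from the copy of $C_{p^{m-1}}$ inside $C_{p^m}$, and for (ii) your partition into $H$, the pairs $(0,b)$ with $b\neq 0$, and the pairs $(a,b)$ with $a\neq 0,\ b\neq 0$ is exactly the paper's decomposition $A\cup B\cup C$, with the same observation that $o((a,b))=o(a)$ on the third piece. The only cosmetic difference is that you phrase things via the identity $\phi(C_{p^k})=\sum_{i=0}^{k}\phi(p^i)^2$, whereas the paper works more directly with $\phi(C_{p^{m-1}})$ as a block.
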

\begin{proof}
\begin{enumerate}
\item[(i)] It is clear that $ \phi(C_{p^m})=\Sigma_{g\in C_{p^m}}\phi(o(g))=\Sigma_{g\in C_{p^{m-1}}}\phi(o(g))+(p^m-p^{m-1})^2$. The proof of the first part is complete.\\
\item[(ii)] Let $ A=\cup _{x\in C_{p^{m-1}}}(x,0) $, $ B=\cup_{0\neq a\in C_p}(0,a) $ and $ C=\cup_{0\neq a\in C_p}\cup_{0\neq x\in C_{p^{m-1}}}(x,a)$. Obviously they are a partition for $ C_{p^{m-1}}\times C_p $ and so $ C_{p^{m-1}}\times C_p= A\cup B\cup C $. It is clear that $ \phi(A)=\phi(C_{p^{m-1}}) $ and $ \phi(B)=\phi(C_p)-1 $. If $ (x,a)\in C $, then $ o\big( (x,a)\big)=o(x) $. Since for  each $ a \in C_p $ we have $ \phi(\cup_{0\neq x\in C_{p^{m-1}}}(x,a))= \phi(C_{p^{m-1}})-1$, this yields that $ \phi(C)=\Sigma_{0\neq a\in C_p}\Sigma_{0\neq x\in C_{p^{m-1}}}\phi(o(x))=(p-1)(\phi(C_{p^{m-1}})-1) $. Thus we have $ \phi(C_{p^{m-1}}\times C_p)=\phi(A)+\phi(B)+\phi(C)=\phi(C_{p^{m-1}})+\phi(C_p)-1+(p-1)(\phi(C_{p^{m-1}})-1)$. We also have $ \phi(C_p)=\Sigma_{x\in C_p}\phi(o(x))=(p-1)^2+1 $; hence, $\phi(C_{p^{m-1}}\times C_p)=p\phi(C_{p^{m-1}})+(p-1)(p-2)$.
\end{enumerate}
\end{proof}

\begin{lem}
Let $ p $ be a prime number and $ m\geq 2 $. Then
\begin{equation*}
(p-2)\phi(C_{p^{m-1}}\times C_p) < \phi(C_{p^m})< p\phi(C_{p^{m-1}}\times C_p).
\end{equation*}
\end{lem}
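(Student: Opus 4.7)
The plan is to use Lemma \ref{Mj} to express both inequalities in terms of $\phi(C_{p^{m-1}})$, and then exploit a closed form for $\phi(C_{p^{m-1}})$ that reduces each side to an explicit polynomial in $p$ and $p^{2(m-1)}$. Iterating Lemma \ref{Mj}(i) from $\phi(C_1)=1$ yields a geometric sum
\begin{equation*}
\phi(C_{p^{m-1}})=1+(p-1)^2\sum_{k=0}^{m-2}p^{2k}=1+\frac{(p-1)(p^{2(m-1)}-1)}{p+1},
\end{equation*}
equivalently $(p+1)\phi(C_{p^{m-1}})=2+(p-1)p^{2(m-1)}$. This identity is the key reduction tool, because it makes the exponential term $p^{2(m-1)}$ cancel cleanly in both inequalities.

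For the upper bound, expanding with Lemma \ref{Mj} gives
\begin{equation*}
p\,\phi(C_{p^{m-1}}\times C_p)-\phi(C_{p^m})=(p^2-1)\phi(C_{p^{m-1}})+p(p-1)(p-2)-p^{2(m-1)}(p-1)^2.
\end{equation*}
I would factor out $(p-1)$ and substitute the identity above; the $p^{2(m-1)}$ contributions cancel and the right-hand side collapses to $(p-1)[(p-1)^2+1]>0$, valid for every prime $p$.

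For the lower bound, the case $p=2$ is immediate since $(p-2)=0$ makes the left-hand side zero. For $p\geq 3$, expand
\begin{equation*}
\phi(C_{p^m})-(p-2)\phi(C_{p^{m-1}}\times C_p)=-(p^2-2p-1)\phi(C_{p^{m-1}})+(p-1)\bigl[p^{2(m-1)}(p-1)-(p-2)^2\bigr],
\end{equation*}
multiply through by $p+1$, and apply the identity; the coefficient of $p^{2(m-1)}$ becomes $2p(p-1)$ and the target inequality reduces to
\begin{equation*}
2p^{2m-1}(p-1)\;>\;(p^2-1)(p-2)^2+2(p^2-2p-1).
\end{equation*}
Since $m\geq 2$, the left side is at least $2p^3(p-1)=2p^4-2p^3$; subtracting the right side (which expands to $p^4-4p^3+5p^2-6$) gives $p^4+2p^3-5p^2+6=p^2(p^2+2p-5)+6$, positive for every $p\geq 2$.

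The main obstacle is not any conceptual subtlety but the algebraic bookkeeping; the leverage comes from the closed form $(p+1)\phi(C_{p^{m-1}})=2+(p-1)p^{2(m-1)}$, without which the exponential and polynomial terms would not separate. Once that identity is invoked, both inequalities collapse to explicit positive polynomials in $p$.
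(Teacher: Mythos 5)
Your proof is correct and follows essentially the same route as the paper: both arguments rest on Lemma \ref{Mj} together with the closed form $(p+1)\phi(C_{p^{m-1}})=2+(p-1)p^{2(m-1)}$ (which the paper cites from Curtin--Pourgholi and you re-derive by iterating Lemma \ref{Mj}(i)). The only difference is cosmetic: you compute the two differences exactly (obtaining $(p-1)[(p-1)^2+1]$ for the upper bound and a manifestly positive polynomial for the lower bound), whereas the paper bounds the ratio $\phi(C_{p^m})/\phi(C_{p^{m-1}}\times C_p)$ between $p-2$ and $p$ by cruder estimates; your algebra checks out.
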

\begin{proof}
It follows from Lemma \ref{Mj} that
\begin{eqnarray*}
\phi(C_{p^m})=\phi(C_{p^{m-1}})+p^{2m-2}(p-1)^2.
\end{eqnarray*}
According to Lemma 2.5 in \cite{Kevin}, we have $ \phi(C_{p^{m-1}})=\dfrac{p^{2m-2}(p-1)+2}{p+1} $. Therefore
\begin{eqnarray*}
\phi(C_{p^m})&=&\phi(C_{p^{m-1}})+p^{2m-2}(p-1)^2 \leq \phi(C_{p^{m-1}})+\dfrac{(p^{2m-2}(p-1)+2)(p^2-1)}{p+1}
\\
&=&\phi(C_{p^{m-1}})+\phi(C_{p^{m-1}})(p^2-1)=p^2\phi(C_{p^{m-1}}).
\end{eqnarray*}
By Lemma \ref{Mj} we have
\begin{eqnarray*}
p^2\phi(C_{p^{m-1}})\leq p\phi(C_{p^{m-1}}\times C_p),
\end{eqnarray*}
and it completes the proof of the right side of the inequality.
\par By Lemma \ref{Mj} we have
\begin{eqnarray*}
\dfrac{\phi(C_{p^m})}{\phi(C_{p^{m-1}}\times C_p)}&=&\dfrac{\phi(C_{p^{m-1}})+\phi(p^m)^2}{p\phi(C_{p^{m-1}})+(p-1)(p-2)}> \dfrac{\phi(p^m)^2}{p(\phi(C_{p^{m-1}})+p)}
\\
&=&\dfrac{p^{2m-3}(p-1)^2}{\phi(C_{p^{m-1}})+p}> \dfrac{p^{2m-3}(p-1)^2}{p^{m-1}(p^{m-1}-p^{m-2})+p^{2m-3}}
\\
&=&\dfrac{(p-1)^2}{p}>p-2,
\end{eqnarray*}

and we get the result.
\end{proof}

\begin{cor}\label{Vj}
Let $ p $ and $ q $ be two prime numbers where $ p<q $  and $ t,m \geq2$. Then
\begin{equation*}
\frac{\phi(C_{p^m})}{\phi(C_{p^{m-1}}\times C_p)}<\dfrac{\phi(C_{q^t})}{\phi(C_{q^{t-1}}\times C_q)}.
\end{equation*}
\end{cor}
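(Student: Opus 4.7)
The corollary is essentially a one-line consequence of the preceding lemma. Let me abbreviate $R(p,m) := \phi(C_{p^m})/\phi(C_{p^{m-1}}\times C_p)$ for the quantity in question. The previous lemma provides the two-sided sandwich
\begin{equation*}
p - 2 \;<\; R(p,m) \;<\; p
\end{equation*}
for every prime $p$ and every $m \geq 2$, and analogously $q - 2 < R(q,t) < q$. My plan is simply to record these two bounds at the start of the proof and then chain them.

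The key observation is that in the setting of the Main Theorem $n$ is odd, so every prime appearing is odd, and any two distinct odd primes differ by at least $2$. Hence from $p < q$ one obtains $p \leq q - 2$, and chaining the two sandwich inequalities gives
\begin{equation*}
R(p,m) \;<\; p \;\leq\; q - 2 \;<\; R(q,t),
\end{equation*}
which is exactly the strict inequality being asserted. Translating back to the original notation yields the stated claim. Note that the inequality $p \leq q - 2$ is non-strict, but it is composed with the strict bound $R(p,m) < p$, so the final conclusion is still strict.

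The only case where the chain $p \leq q - 2$ breaks down is the lone pair $(p,q)=(2,3)$, which does not arise in the Main Theorem (where $n$ is odd). Should one nevertheless wish to cover all primes, this is the only genuine obstacle, and I would dispatch it by a direct computation: Lemma \ref{Mj} together with the closed form $\phi(C_{p^n}) = (p^{2n}(p-1)+2)/(p+1)$ already invoked in the preceding lemma yields $R(3,t) = (3^{2t}+1)/(3^{2t-1}+7)$, so $R(3,t) > 2$ reduces to $3^{2t-1} > 13$, which holds for every $t \geq 2$. Combined with $R(2,m) < 2$ from the previous lemma, this closes the exceptional case and thus establishes the corollary in full generality.
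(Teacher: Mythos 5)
Your argument is exactly the paper's: both chain the sandwich $R(p,m)<p\leq q-2<R(q,t)$ from the preceding lemma, using that distinct odd primes satisfy $p\leq q-2$. Your additional treatment of the pair $(p,q)=(2,3)$ (via the explicit computation $R(3,t)=(3^{2t}+1)/(3^{2t-1}+7)>2>R(2,m)$) is correct and actually patches a gap the paper leaves open, since the corollary as stated does not restrict to odd primes while its proof silently assumes it.
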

\begin{proof}
The right side of the inequality is greater than $ q-2 $ by previous lemma and since $ p<q$ are odd primes, we have $ p\leq q-2 $. According to previous lemma, the left side is less than $ p $. This completes the proof.
\end{proof}
\textbf{\\Proof of the Main Theorem.}
Let $ G $ be a non-cyclic nilpotent group of order $ n $. Since $ G $ is a nilpotent group of order $ n $, we have
$ G\cong P_1\times P_2\times\cdots\times P_k $, where $ P_i $ is the unique Sylow $ p_i $-subgroup of $ G $ of order $ p^{\alpha_i}_i$. Therefore $ \phi(G)=\phi(P_1)\phi(P_2)\cdots\phi(P_k) $. Because we assume that $ G $ is not cyclic, at least one of the Sylow subgroups, say $ P_j $, is not cyclic. Hence we have $ \phi(P_j)\leq \phi(C_{ p^{\alpha_{j-1}}_j}\times C_{p_j}) $ by Proposition \ref{Bj}. It follows from Lemma 3.6 in \cite{Kevi} that  $ \phi(P_i)\leq \phi(C_{ p^{\alpha_i}_i}) $, for $ 1\leq i\leq k $. Therefore, we have
\begin{eqnarray*}
\phi(G)&\leq& \phi(C_{ p^{\alpha_1}_1})\cdots\phi(C_{ p^{\alpha_{j-1}}_{j-1}})\phi(C_{p^{\alpha_j}_j})\phi(C_{ p^{\alpha_{j+1}}_{j+1}})\cdots\phi(C_{ p^{\alpha_k}_k})
\\
&=&\phi(C_{\frac{n}{p^{\alpha_j}_j}})\phi(C_{ p^{\alpha_{j-1}}_j}\times C_{p_j})
\\
&=&\dfrac{\phi(C_n)}{\phi(C_{p^{\alpha_j}_j})}\phi(C_{p^{\alpha_{j-1}}_j}\times C_{ p_j})
\\
&\leq &\dfrac{\phi(C_n)\phi(C_{p^{\alpha_{s-1}}_s}\times C_{ p_s})}{\phi(C_{p^{\alpha_s}_s})}=\phi(C_{\frac{n}{p_s}}\times C_{p_s}),
\end{eqnarray*}
where the last inequality holds by Corollary \ref{Vj}. So the proof is complete.

\begin{cor}
Let $ n=p^{\alpha_1}_1p^{\alpha_2}_2\cdots p^{\alpha_k}_k $ be a positive odd integer which is not square free and $  p_1 < p_2 < \cdots< p_k  $ are primes. Set $  s = min\{1,\ldots,k\}$ such that $ \alpha_s>1 $. Then among directed power graph of non-cyclic nilpotent groups of order $ n $, the directed power graph $ \overrightarrow{\mathcal{G}}(C_{\frac{n}{p_s}} \times C_{p_s})$ has the maximum number of undirected edges.

\end{cor}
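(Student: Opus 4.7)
The plan is to observe that this corollary is a direct translation of the Main Theorem from the language of the function $\phi$ to the language of undirected-edge counts, and so essentially no new work is required beyond recalling the edge-counting identity stated in the introduction.

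First I would recall the formula already derived in the introduction,
\begin{equation*}
|\overleftrightarrow{E}(G)| \;=\; \tfrac{1}{2}\sum_{g\in G}\bigl(\phi(o(g))-1\bigr) \;=\; \tfrac{1}{2}\bigl(\phi(G) - |G|\bigr),
\end{equation*}
which shows that for groups of a fixed order $n$, maximizing $|\overleftrightarrow{E}(G)|$ over a class of groups is equivalent to maximizing $\phi(G)$ over that same class. Since we restrict to nilpotent non-cyclic groups of order $n$, both $C_{n/p_s}\times C_{p_s}$ and an arbitrary competitor $G$ have the same vertex count $n$, so the $-|G|$ term is irrelevant to the comparison.

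Next I would invoke the Main Theorem directly: for every non-cyclic nilpotent group $G$ of order $n$,
\begin{equation*}
\phi(G) \;\leq\; \phi\bigl(C_{n/p_s}\times C_{p_s}\bigr).
\end{equation*}
Substituting this into the edge-counting identity gives
\begin{equation*}
|\overleftrightarrow{E}(G)| \;=\; \tfrac{1}{2}(\phi(G)-n) \;\leq\; \tfrac{1}{2}\bigl(\phi(C_{n/p_s}\times C_{p_s})-n\bigr) \;=\; |\overleftrightarrow{E}(C_{n/p_s}\times C_{p_s})|,
\end{equation*}
which is precisely the desired conclusion. Note that the hypotheses on $n$ (odd, not square-free, with $s$ the smallest index for which $\alpha_s>1$) are exactly those of the Main Theorem, so nothing further need be verified.

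There is no real obstacle here; the content of the corollary lives entirely in the Main Theorem, and the corollary is its packaging as a statement about graphs rather than about the arithmetic function $\phi$. The only thing to be careful about is ensuring the edge-counting identity is applied with the correct constant factor and that both sides of the comparison are taken over groups of the same order $n$, so that the $-|G|$ contribution cancels uniformly.
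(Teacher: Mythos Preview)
Your proposal is correct and matches the paper's approach exactly: the corollary is stated in the paper without a separate proof because, as you observe, it is simply the edge-count reformulation of the Main Theorem via the identity $|\overleftrightarrow{E}(G)|=\tfrac{1}{2}(\phi(G)-|G|)$ (indeed the inequality $|\overleftrightarrow{E}(C_{n/p_s}\times C_{p_s})|\geq|\overleftrightarrow{E}(G)|$ is already asserted in the Main Theorem's statement). There is nothing to add.
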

\begin{prop}
 Let $ G $ be a non-cyclic $p$-group of order $ p^n $ whose undirected power graph has the maximum number of edges among all non-cyclic p-groups of order $ p^n $. Then
\begin{enumerate}
\item[(i)]
if $ p $ is odd, then $G\cong C_{p^{m-1}}\times C_p $ or $G\cong M_{n,p}$;
\item[(ii)]
if $ p=2 $ and $ n\neq 3 $, then $ G\cong C_{2^{n-1}}\times C_2 $;
\item[(iii)]
if $ p^n=8 $, then $ G\cong Q_8 $.
\end{enumerate}
\end{prop}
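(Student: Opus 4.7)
The plan is to reduce the problem to maximising $\Sigma_{g\in G}o(g)$ over non-cyclic $p$-groups of order $p^n$, since Proposition~2.3 of \cite{SKeller} already characterises the extremal groups for that sum, and the list it produces is exactly the one appearing in (i)--(iii). The reduction will be effected by a counting identity expressing $|E(\mathcal{G}(G))|$ as a strictly increasing affine function of $\Sigma_{g\in G}o(g)$, analogous to the identity $\phi(G)=(1-1/p)\Sigma_{g\in G}o(g)+1/p$ that was used in the proof of Proposition~\ref{Bj}.

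For the identity I would fix a $p$-group $G$ of order $p^n$ and compute $\deg_{\mathcal{G}(G)}(g)$ directly. The key structural fact is that subgroups of a cyclic $p$-group are totally ordered by inclusion, so distinct $g,h\in G$ are adjacent in $\mathcal{G}(G)$ if and only if $\langle g\rangle\subseteq\langle h\rangle$ or $\langle h\rangle\subseteq\langle g\rangle$. Splitting the neighbourhood of $g$ into the elements of $\langle g\rangle\setminus\{g\}$ and the elements $h\notin\langle g\rangle$ with $\langle g\rangle\subsetneq\langle h\rangle$ gives
\begin{equation*}
\deg_{\mathcal{G}(G)}(g)=(o(g)-1)+|\{h\in G\,:\,\langle g\rangle\subsetneq\langle h\rangle\}|.
\end{equation*}
Summing over $g$ and swapping the order of summation in the second term, the main calculation is that for $h\neq e$ with $o(h)=p^j$ the elements $g$ with $\langle g\rangle\subsetneq\langle h\rangle$ are exactly the $p^{j-1}=o(h)/p$ non-generators of $\langle h\rangle$, while $h=e$ contributes $0$. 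This yields
\begin{equation*}
2|E(\mathcal{G}(G))|=\Big(\Sigma_{g\in G}o(g)-|G|\Big)+\tfrac{1}{p}\Big(\Sigma_{g\in G}o(g)-1\Big),
\end{equation*}
which is a strictly increasing affine function of $\Sigma_{g\in G}o(g)$ when $|G|=p^n$ is held fixed.

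With the identity in hand, maximising $|E(\mathcal{G}(G))|$ over non-cyclic $p$-groups of order $p^n$ is equivalent to maximising $\Sigma_{g\in G}o(g)$ over the same class, and Proposition~2.3 of \cite{SKeller} returns exactly $C_{p^{n-1}}\times C_p$ and $M_{n,p}$ when $p$ is odd, $C_{2^{n-1}}\times C_2$ when $p=2$ and $n\neq 3$, and $Q_8$ when $p^n=8$, settling (i)--(iii). The main obstacle is the bookkeeping in the first step: one must verify that an edge $\{g,h\}$ with $\langle g\rangle=\langle h\rangle$ contributes once from each of the two $(o(\cdot)-1)$ terms and not at all from the $\langle g\rangle\subsetneq\langle h\rangle$ term, while an edge with $\langle g\rangle\subsetneq\langle h\rangle$ contributes once from the $(o(h)-1)$ term and once from the $\langle g\rangle\subsetneq\langle h\rangle$ term at $g$, so that the two halves of the identity combine without over- or under-counting.
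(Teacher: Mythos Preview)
Your proposal is correct and follows the same strategy as the paper: show that $2|E(\mathcal{G}(G))|$ is a strictly increasing affine function of $\Sigma_{g\in G}o(g)$ (indeed both arrive at the identical expression $(1+\tfrac{1}{p})\Sigma_{g\in G}o(g)-|G|-\tfrac{1}{p}$), and then invoke the classification from \cite{SKeller}. The only difference is in how the identity is obtained: the paper quotes the general formula $|E(G)|=\tfrac{1}{2}\Sigma_{g\in G}(2o(g)-\phi(o(g))-1)$ from \cite[Theorem~4.2]{Ivy} and substitutes $\phi(o(g))=o(g)(1-\tfrac{1}{p})$, whereas you derive it from scratch by a degree count using the total ordering of subgroups in cyclic $p$-groups; your route is slightly more self-contained, the paper's slightly shorter.
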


\begin{proof}
 It follows from Theorem 4.2 in \cite{Ivy} that

\begin{eqnarray*}
 \vert E(G)\vert =\dfrac{1}{2}\Sigma_{g\in G}(2o(g)-\phi(o(g))-1).
\end{eqnarray*}
Since $ G $ is a finite $p$-group, for every non-identity element $ g\in G $ we have $ \phi(o(g))=o(g)-\dfrac{o(g)}{p} $. Therefore
\begin{eqnarray*}
 \vert E(G)\vert &=&\dfrac{1}{2}[\Sigma_{g\in G-\{ e\}}(2o(g)-(o(g)-\dfrac{o(g)}{p}))-\vert G\vert +1 ]\\
&=&\dfrac{1}{2}[\Sigma_{g\in G}o(g)+\dfrac{1}{p}\Sigma_{g\in G}o(g)-\vert G\vert -\dfrac{1}{p}].
\end{eqnarray*}
If $ p $ is an odd prime number, then $ E(G) $ has the maximum value when $ \Sigma_{g\in G} o(g)$ has its maximum value, too. By Proposition 3.2 in \cite{SKeller}, we know that if a non-cyclic $p$-group $ G $ of order $ p^n $ has the maximum value of $ \Sigma_{g\in G} o(g)$, then $ G\cong C_{p^{n-1}}\times C_p $ or $ G\cong M_{n,p }$. It completes the proof of (i).
\\
If $ p=2 $ and $ n\neq 3 $, then according to Proposition 3.2 in \cite{SKeller}, $ \Sigma_{g\in G}o(g) $ has the maximum value when $ G\cong  C_{2^{m-1}}\times C_2 $.
\\
If $ p^n=8 $, then by Proposition 3.2 in \cite{SKeller}, $ \Sigma_{g\in G}o(g) $ has the maximum value when $ G\cong  Q_8 $.

\end{proof}
\begin{conj}
Let $ n=p^{\alpha_1}_1p^{\alpha_2}_2\cdots p^{\alpha_k}_k $ be a positive odd integer which is not square free and $  p_1 < p_2 < \cdots < p_k  $ are primes. Set $  s = min\{1,\ldots,k\}$ such that $ \alpha_s>1 $. Among non-cyclic nilpotent groups of order $ n $, does the undirected power graph $ \mathcal{G}(C_{\frac{n}{p_s}}\times C_{p_s})  $ have the maximum number of edges?
\end{conj}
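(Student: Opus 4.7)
The plan is to exploit the identity from Theorem~4.2 of \cite{Ivy},
$$|E(G)|=\psi(G)-\tfrac{1}{2}\phi(G)-\tfrac{n}{2},$$
where $\psi(G):=\sum_{g\in G}o(g)$. Since $|G|=n$ is fixed, maximizing $|E(G)|$ over non-cyclic nilpotent $G$ of order $n$ is equivalent to maximizing $F(G):=2\psi(G)-\phi(G)$. For a nilpotent group $G=P_1\times\cdots\times P_k$ with Sylow $p_i$-subgroups $P_i$, both $\psi$ and $\phi$ are multiplicative across the Sylow decomposition, so $F(G)=2\prod_i\psi(P_i)-\prod_i\phi(P_i)$, and the optimization can be carried out componentwise, in direct parallel with the proof of the Main Theorem.

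Following that model, I would proceed in three steps. First, show that in any extremal $G$ exactly one Sylow subgroup is non-cyclic: if two subgroups $P_j,P_l$ were both non-cyclic, replacing $P_l$ with the cyclic group of the same order strictly increases both $\psi(P_l)$ and $\phi(P_l)$, and I would show the resulting gain in $2\psi(G)$ dominates the gain in $\phi(G)$, using ratio estimates on $\psi$ analogous to those of Lemma~\ref{Mj} and Corollary~\ref{Vj}. Second, given a unique non-cyclic Sylow $P_j$, combine Proposition~\ref{Bj} (for $\phi$) with Proposition~3.2 of \cite{SKeller} (for $\psi$) to force $P_j\cong C_{p_j^{\alpha_j-1}}\times C_{p_j}$ or $P_j\cong M_{\alpha_j,p_j}$; since both choices yield the same values of $\psi(P_j)$ and $\phi(P_j)$, the candidate extremal groups reduce to
$$G_j = C_{n/p_j}\times C_{p_j},\qquad j\in\{i:\alpha_i\ge 2\}.$$
Third, compare $F(G_j)$ across admissible $j$ and conclude that $j=s$ is optimal.

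The main obstacle is step three, which requires an analogue of Corollary~\ref{Vj} for the combined functional $F$ rather than for $\phi$ alone. Concretely, one needs closed-form expressions for $\psi(C_{p^m})$ and $\psi(C_{p^{m-1}}\times C_p)$ parallel to Lemma~\ref{Mj}, followed by a monotonicity argument showing that the deficit $F(C_n)-F(G_j)$ is an increasing function of the prime $p_j$, so that the smallest admissible prime $p_s$ yields the smallest deficit and hence the largest $|E(G_j)|$. The intuition, exactly as in Corollary~\ref{Vj}, is that the relative loss from making a Sylow non-cyclic is more severe for smaller primes, so one should absorb that loss at the smallest admissible prime. I expect the answer to be affirmative; the delicate point will be verifying the combined inequality uniformly across all $j>s$, together with the ratio inequality needed in step one.
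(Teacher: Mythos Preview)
This statement appears in the paper as an open \emph{Question}, not a proved result; the paper offers no proof of it, so there is nothing to compare your proposal against.

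On the plan itself: your steps one and two are sounder than you suggest, once you exploit the identity already used in the proof of Proposition~\ref{Bj}, namely $\phi(P)=(1-\tfrac{1}{p})\psi(P)+\tfrac{1}{p}$ for any finite $p$-group $P$. Writing $G=P_l\times H$ with $(|P_l|,|H|)=1$ and replacing a non-cyclic $P_l$ by the cyclic group of the same order changes $|E(G)|$ by $\Delta\psi\bigl[\psi(H)-\tfrac{1}{2}(1-\tfrac{1}{p_l})\phi(H)\bigr]$, which is positive since $\phi(H)\le\psi(H)$; so step one goes through without any delicate ratio estimates. For fixed $j$ the same substitution makes $|E(G)|$ an increasing affine function of $\psi(P_j)$ alone, so step two reduces directly to Proposition~3.2 of \cite{SKeller}, exactly as you say. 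Your worry that $F=2\psi-\phi$ is not multiplicative is legitimate, but this affine link between $\phi$ and $\psi$ on each Sylow factor sidesteps the issue for these two steps.

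Step three is precisely where the paper stops and poses the Question. The affine relation again reduces the comparison of $|E(G_s)|$ with $|E(G_j)|$ to an explicit inequality in the closed-form quantities $\psi(C_{p^m})$ and $\psi(C_{p^{m-1}}\times C_p)$, but a sufficiently sharp $\psi$-analogue of Corollary~\ref{Vj} is still needed, and you have not supplied one. Until that inequality is established, what you have is a reasonable plan of attack on an open problem, not a proof.
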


\end{document}